\theoremstyle{plain}
\newtheorem{thm}{Theorem}
\newtheorem{lem}[thm]{Lemma}
\newtheorem{prop}[thm]{Proposition}
\newtheorem{definition}[thm]{Definition}
\newtheorem{remark}[thm]{Remark}
\theoremstyle{definition}
\newtheorem{exl}[thm]{Example}
\numberwithin{thm}{section}
\newcommand{\adj}{\leftrightarrow}
\newcommand{\adjeq}{\leftrightarroweq}
\DeclareMathOperator{\id}{id}
\DeclareMathOperator{\Fix}{Fix}
\newcommand{\Z}{\mathbb{Z}}
\title{Subsets and Freezing Sets in the Digital Plane}
\author{Laurence Boxer
\thanks{Department of Computer and Information
Sciences, Niagara University, NY 14109, USA; and
Department of Computer Science and Engineering,
State University of New York at Buffalo.
Email: boxer@niagara.edu}
}
\date{}
\begin{document}

\maketitle

\begin{abstract}
    We continue the study of freezing sets for digital images introduced
    in~\cite{bs19a,BxFpSets,BxConvex}. We prove methods for obtaining
    freezing sets for digital images $(X,c_i)$ for $X \subset \Z^2$ and
    $i \in \{1,2\}$. We give examples to show how these methods can lead
    to the determination of {\em minimal} freezing sets.
    
2010 Mathematics Subject Classification: 54H25

Key words and phrases: digital topology, fixed point, freezing set, convex
\end{abstract}

\section{Introduction}

A digital image is a graph typically used to
model an object in Euclidean space that it
represents. Researchers in digital topology have 
had much success using methods inspired by 
classical topology to show that
digital images have properties such as
connectedness, continuous function, homotopy, 
fundamental group, homology, automorphism group, 
Euler characteristic, et al., analogous to those
of the objects represented.

However, the fixed point properties of a 
Euclidean object and its digital representative 
are often quite different. If $f: X \to X$ is
a continuous function on a Euclidean space,
knowledge of the fixed point set of $f$,
$\Fix(f)$, often tells us little about
$f|_{X \setminus \Fix(f)}$. By contrast,
if $f: (X,\kappa) \to (X,\kappa)$ is a
digitally continuous function, knowledge of 
$\Fix(f)$ often tells us 
much~\cite{bs19a,BxFpSets,BxConvex} about
$f|_{X \setminus \Fix(f)}$. 

The study of {\em freezing 
sets}~\cite{BxFpSets,BxConvex} helps us
deal with the following question:
If $f: (X,\kappa) \to (X,\kappa)$ is a
digitally continuous function and
$A \subset \Fix(f)$, must $f = \id_X$?
In this paper, we expand our knowledge of
freezing sets in digital images.

\section{Preliminaries}
Much of this section is quoted or paraphrased
from~\cite{BxFpSets,BxConvex} and other
references.

We use $\Z$ to indicate the set of integers.
% and $\R$ for the set of real numbers.
%$\N$ for the set of natural numbers, and
%$\N^*$ for the set of nonnegative integers.
%For a finite set~$X$, we denote by $\#X$ the number of distinct members of~$X$.

\subsection{Adjacencies}
The $c_u$-adjacencies are commonly used 
in digital topology.
Let $x,y \in \Z^n$, $x \neq y$, where we consider these points as $n$-tuples of integers:
\[ x=(x_1,\ldots, x_n),~~~y=(y_1,\ldots,y_n).
\]
Let $u \in \Z$,
$1 \leq u \leq n$. We say $x$ and $y$ are 
{\em $c_u$-adjacent} if
\begin{itemize}
\item there are at most $u$ indices $i$ for which 
      $|x_i - y_i| = 1$, and
\item for all indices $j$ such that $|x_j - y_j| \neq 1$ we
      have $x_j=y_j$.
\end{itemize}
Often, a $c_u$-adjacency is denoted by the number of points
adjacent to a given point in $\Z^n$ using this adjacency.
E.g.,
\begin{itemize}
\item In $\Z^1$, $c_1$-adjacency is 2-adjacency.
\item In $\Z^2$, $c_1$-adjacency is 4-adjacency and
      $c_2$-adjacency is 8-adjacency.
\item In $\Z^3$, $c_1$-adjacency is 6-adjacency,
      $c_2$-adjacency is 18-adjacency, and $c_3$-adjacency
      is 26-adjacency.
%\item In $\Z^n$, $c_1$-adjacency is $2n$-adjacency and $c_n$-adjacency is $(3^n - 1)$-adjacency.
\end{itemize}

For $\kappa$-adjacent $x,y$, we write $x \adj_{\kappa} y$ or $x \adj y$ when $\kappa$ is understood.
We write $x \adjeq_{\kappa} y$ or $x \adjeq y$ to mean that either $x \adj_{\kappa} y$ or $x = y$.
%We say subsets $A,B$ of a digital image $X$ are ($\kappa$-)adjacent, $A \adjeq_{\kappa} B$ or
%$A \adjeq B$ when $\kappa$ is understood, if there exist $a \in A$ and $b \in B$ such that
%$a \adjeq_{\kappa} b$.

We say $\{x_n\}_{n=0}^k \subset (X,\kappa)$ is a {\em $\kappa$-path} (or a {\em path} if $\kappa$ is understood)
from $x_0$ to $x_k$ if $x_i \adjeq_{\kappa} x_{i+1}$ for $i \in \{0,\ldots,k-1\}$, and $k$ is the {\em length} of the path.

\begin{comment}
Given subsets $A,B\subset X$, we say that $A$ and $B$ are \emph{adjacent} if
there exist points $a\in A$ and $b\in B$ such that
$a \adjeq b$. Thus sets with nonempty intersection are automatically adjacent, while disjoint sets may or may not be adjacent.
\end{comment}

A subset $Y$ of a digital image $(X,\kappa)$ is
{\em $\kappa$-connected}~\cite{Rosenfeld},
or {\em connected} when $\kappa$
is understood, if for every pair of points $a,b \in Y$ there
exists a $\kappa$-path in $Y$ from $a$ to $b$.

We define
\[ N(X,\kappa, x) = \{ y \in X \, | \, x \adj_{\kappa} y\}.
\]
\[ N^*(X,\kappa, x) = \{ y \in X \, | \, x \adjeq_{\kappa} y\} =
   N(X,\kappa, x) \cup \{x\}.
\]

\begin{definition}
{\rm \cite{BxConvex}}
\label{bdDef}
Let $X \subset \Z^n$.
%\begin{itemize}
%    \item 
The {\em boundary of $X$ with respect to the $c_i$ adjacency},
$i \in \{1,2\}$, is
\[Bd_i(X) = \{x \in X \, | \mbox{ there exists } y \in \Z^n \setminus X \mbox{ such that } y \adj_{c_i} x\}.
\]
\end{definition}
Note $Bd_1(X)$ is what is called the {\em boundary of $X$}
in~\cite{RosenfeldMAA}. However, for this paper, $Bd_2(X)$ offers
certain advantages.
%\item 
%The {\em interior of} $X$
% with respect to the $c_i$ adjacency
%is $Int_i(X) = X \setminus Bd_i(X)$.
%\end{itemize}
%\end{definition}

\subsection{Digitally continuous functions}
Material in this section is quoted or paraphrased
from~\cite{BxFpSets}.

The following generalizes a definition of~\cite{Rosenfeld}.

\begin{definition}
\label{continuous}
{\rm ~\cite{Bx99}}
Let $(X,\kappa)$ and $(Y,\lambda)$ be digital images. 
A function $f: X \rightarrow Y$ is 
{\em $(\kappa,\lambda)$-continuous} if for
every $\kappa$-connected $A \subset X$ we have that
$f(A)$ is a $\lambda$-connected subset of $Y$.
If $(X,\kappa)=(Y,\lambda)$, we say such a function is {\em $\kappa$-continuous},
denoted $f \in C(X,\kappa)$.
$\Box$
\end{definition}

When the adjacency relations are understood, we may simply say that 
$f$ is \emph{continuous}. Continuity can be expressed in terms of 
adjacency of points:
\begin{thm}
{\rm ~\cite{Rosenfeld,Bx99}}
A function $f: (X,\kappa) \to (Y,\lambda)$ is continuous if and only if $x \adj_{\kappa} x'$ in $X$ implies 
$f(x) \adjeq_{\lambda} f(x')$.
\end{thm}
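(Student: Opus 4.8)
The plan is to prove both directions of the biconditional, treating the two implications separately since one is essentially immediate from the definition and the other requires a connectedness argument.

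\medskip

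\noindent\textbf{Forward direction.} First I would assume $f$ is $(\kappa,\lambda)$-continuous and show that $x \adj_\kappa x'$ implies $f(x) \adjeq_\lambda f(x')$. The key observation is that if $x \adj_\kappa x'$, then the two-point set $A = \{x, x'\}$ is $\kappa$-connected (there is a trivial $\kappa$-path of length $1$ from $x$ to $x'$). By the definition of continuity, $f(A) = \{f(x), f(x')\}$ must be $\lambda$-connected. A $\lambda$-connected set with at most two points forces $f(x) \adjeq_\lambda f(x')$: either the images coincide, or they are distinct and a path between them of positive length inside a two-point set must step along an adjacency, giving $f(x) \adj_\lambda f(x')$. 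This direction is short and essentially unwinds the definitions.

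\medskip

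\noindent\textbf{Reverse direction.} Conversely, I would assume the pointwise condition and prove that $f$ preserves connectedness of arbitrary subsets. Let $A \subset X$ be $\kappa$-connected; I must show $f(A)$ is $\lambda$-connected. Take any two points $f(a), f(b) \in f(A)$ with $a,b \in A$. By $\kappa$-connectedness of $A$, there is a $\kappa$-path $\{a = x_0, x_1, \ldots, x_k = b\} \subset A$ with $x_i \adjeq_\kappa x_{i+1}$. Applying the hypothesis to each consecutive pair, I get $f(x_i) \adjeq_\lambda f(x_{i+1})$ for every $i$ (when $x_i = x_{i+1}$ the images are trivially equal, and when $x_i \adj_\kappa x_{i+1}$ the hypothesis applies directly). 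Hence $\{f(x_0), \ldots, f(x_k)\}$ is a $\lambda$-path in $f(A)$ from $f(a)$ to $f(b)$, establishing that $f(A)$ is $\lambda$-connected.

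\medskip

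\noindent The main subtlety to handle carefully is the bookkeeping around $\adjeq$ versus $\adj$: since paths are defined using $\adjeq$, consecutive path points may be equal, and the hypothesis of the theorem is stated for strict adjacency $\adj_\kappa$. I expect the only real care needed is verifying that the degenerate cases (equal consecutive points, or a two-point connected set whose points coincide) are compatible with the $\adjeq_\lambda$ conclusion, which they are by definition. No deep obstacle arises; the theorem is a direct translation between the global (connectedness-preserving) and local (adjacency-preserving) formulations of digital continuity, and both halves follow from the path characterization of $\kappa$-connectedness.
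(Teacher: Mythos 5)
Your proof is correct: the forward direction via the two-point $\kappa$-connected set $\{x,x'\}$ and the reverse direction via images of $\kappa$-paths are both complete, and you handle the $\adj$ versus $\adjeq$ bookkeeping properly. The paper itself states this theorem without proof (citing Rosenfeld and \cite{Bx99}), and your argument is precisely the standard one from those sources, so it matches the intended proof in all essentials.
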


Similar notions are referred to as {\em immersions}, 
{\em gradually varied operators}, and {\em gradually varied mappings}
in~\cite{Chen94,Chen04}.

\begin{comment}
Composition preserves continuity, in the sense of the following.

\begin{thm}
{\rm \cite{Bx99}}
\label{composition}
Let $(X,\kappa)$, $(Y,\lambda)$, and $(Z,\mu)$ be digital images.
Let $f: X \to Y$ be $(\kappa,\lambda)$-continuous and let
$g: Y \to Z$ be $(\lambda,\mu)$-continuous. Then
$g \circ f: X \to Z$ is $(\kappa,\mu)$-continuous.
\end{thm}
\end{comment}

For a positive integer $n$ and $i \in \{1, \ldots, n\}$
let $p_i: \Z^n \to \Z$ be the $i^{th}$ {\em projection function} defined as follows.
For $x=(x_1,\ldots, x_n) \in \Z^n$, $p_i(X) = x_i$.

\subsection{Digital disks and bounding curves}
Material in this section is largely quoted or paraphrased 
from~\cite{BxConvex}.

A $c_2$-connected set $S=\{x_i\}_{i=1}^n \subset \Z^2$ is a
{\em (digital) line segment} if the members of $S$ are collinear.

\begin{remark}
\label{segSlope}
{\rm ~\cite{BxConvex}}
A digital line segment must be vertical, horizontal, or have
slope of $\pm 1$. We say a segment with slope of $\pm 1$ is
{\em slanted}.
\end{remark}

A {\em (digital) $\kappa$-closed curve} is a
path $S=\{s_i\}_{i=0}^{m-1}$ such that $s_0=s_{m-1}$,
and $0 < |i - j| < m-1$ 
implies $s_i \neq s_j$. If 
$s_i \adj_{\kappa} s_j$ implies 
$|i - j| \mod m = 1$, $S$ is a {\em (digital) 
$\kappa$-simple closed curve}.
For a simple closed curve $S \subset \Z^2$ we generally assume
\begin{itemize}
    \item $m \ge 8$ if $\kappa = c_1$, and
    \item $m \ge 4$ if $\kappa = c_2$.
\end{itemize}
These are necessary for the Jordan Curve
Theorem of digital topology, below, as a
$c_1$-simple closed curve in $\Z^2$ must have at least 8 points to
have a nonempty finite complementary $c_2$-component,
and a $c_2$-simple closed curve in $\Z^2$ must have at least 4 points to
have a nonempty finite complementary $c_1$-component.
Examples in~\cite{RosenfeldMAA} show why it is
desirable to consider $S$ and $\Z^2 \setminus S$
with different adjacencies.

\begin{thm}
{\rm \cite{RosenfeldMAA}}
{\em (Jordan Curve Theorem for digital topology)}
Let $\{\kappa, \kappa'\} = \{c_1, c_2\}$.
Let $S \subset \Z^2$ be a simple closed 
$\kappa$-curve such that $S$ has at least 8 points if
$\kappa = c_1$ and such that $S$ has at least 
4 points if $\kappa = c_2$. Then
$\Z^2 \setminus S$ has exactly 2 $\kappa'$-connected
components.
\end{thm}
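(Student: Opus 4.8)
This statement is the digital Jordan Curve Theorem, so the plan is to imitate a proof of the classical Jordan Curve Theorem on the lattice, splitting the claim into a \emph{separation} half (at least two $\kappa'$-components) and a \emph{connectedness} half (at most two). By the symmetry of the hypothesis $\{\kappa,\kappa'\}=\{c_1,c_2\}$, it suffices to treat one assignment, say $\kappa=c_1$ and $\kappa'=c_2$, the other case being obtained by interchanging the two adjacencies throughout and invoking the correspondingly interchanged point-count bound.

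For the separation half I would introduce a parity (crossing-number) function on $\Z^2\setminus S$. Writing $S=\{s_i\}_{i=0}^{m-1}$ with $s_0=s_{m-1}$, I fix a generic direction, say a horizontal ray to the right, and for $p\notin S$ let $\nu(p)$ be the number of times the ray meets $S$, reduced mod $2$. The work is to check that $\nu$ is well defined, independent of the ray after a standard case analysis of how the ray can graze the edges $s_i s_{i+1}$ (this is where $s_0=s_{m-1}$ and the non-self-adjacency of a simple closed curve are used), and, crucially, that $\nu$ takes the same value on any two $\kappa'$-adjacent points of the complement. Granting this, each $\kappa'$-component lies entirely in one level set $\nu^{-1}(0)$ or $\nu^{-1}(1)$. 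I then verify both level sets are nonempty: the unbounded component receives parity $0$, while the minimum-length hypothesis ($m\ge 8$ for $\kappa=c_1$) forces at least one enclosed lattice point of parity $1$. This gives at least two components.

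For the connectedness half I would show each of $\nu^{-1}(0)$ and $\nu^{-1}(1)$ is itself $\kappa'$-connected. The exterior is handled by driving any point to infinity along a monotone staircase path avoiding $S$ and joining the resulting far-away points; the interior is the substantive case, which I would attack by induction on the number of enclosed lattice points, peeling a boundary layer and using that each $s_i$ has exactly two curve-neighbors to prevent $S$ from pinching the interior into several pieces. The cleaner route I would actually prefer is to reduce to the classical theorem by thickening: assign to each $p\in\Z^2$ the closed unit square $Q_p$ centered at $p$, so that $c_1$-adjacency corresponds to edge-sharing of squares and $c_2$-adjacency to corner-sharing. Setting $R=\bigcup_{s\in S}Q_s$, the $\kappa$-simple-closed-curve hypothesis makes the topological boundary of $R$ a single polygonal simple closed curve, and one checks that $\kappa'$-connectivity of $\Z^2\setminus S$ matches ordinary connectivity of $\R^2\setminus R$; the classical Jordan Curve Theorem then yields one bounded and one unbounded component, which pull back to the two $\kappa'$-components.

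I expect the main obstacle to be the connectedness of the interior in the combinatorial argument, or equivalently the verification that the digital and continuous notions of connectivity agree under thickening. The separation half is essentially bookkeeping, whereas ruling out extra interior components is where the genuine topological content lives, and it is precisely here that both the minimum-length bounds and the $c_1$/$c_2$ duality are forced: corner contacts must count as connecting for the complement exactly when they are forbidden for the curve, so that $S$ cannot simultaneously separate and fail to separate along a diagonal.
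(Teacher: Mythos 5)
This theorem is quoted in the paper from Rosenfeld's 1979 paper \cite{RosenfeldMAA} without proof, so your proposal can only be judged on its own merits --- and it has two genuine gaps. First, the opening reduction by ``symmetry of the hypothesis'' is not valid: there is no automorphism of $\Z^2$ interchanging $c_1$- and $c_2$-adjacency (the adjacency graphs are $4$-regular and $8$-regular, hence not isomorphic), and the two assignments of $\{\kappa,\kappa'\}$ are genuinely different statements with different hypotheses ($m \ge 8$ versus $m \ge 4$) and different failure modes. ``Interchanging the adjacencies throughout'' does not transport a proof: your ray-parity argument for a $c_1$-curve leans on the curve's polygonal realization having only axis-parallel edges, and the interaction of rays and of diagonal complement-steps with the slanted edges of a $c_2$-curve is a different case analysis, not the same one with labels swapped. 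Both cases must be argued separately, as Rosenfeld does.

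Second, and more seriously, your preferred thickening reduction is false as stated in the case $\kappa = c_1$. The definition of a simple closed $\kappa$-curve forbids only $\kappa$-adjacencies between non-consecutive points, so a $c_1$-simple closed curve may have diagonal ($c_2$) self-contacts. Take the $20$-point $c_1$-simple closed curve through $(-1,-1)$, $(0,-1)$, $(1,-1)$, $(2,-1)$, $(2,0)$, $(2,1)$, $(3,1)$, $(4,1)$, $(4,2)$, $(4,3)$, $(4,4)$, $(3,4)$, $(2,4)$, $(1,4)$, $(1,3)$, $(1,2)$, $(0,2)$, $(-1,2)$, $(-1,1)$, $(-1,0)$. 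Its bounded digital complement consists of the two $2 \times 2$ blocks $\{(0,0),(1,0),(0,1),(1,1)\}$ and $\{(2,2),(3,2),(2,3),(3,3)\}$, which form a single $c_2$-component only because $(1,1) \adj_{c_2} (2,2)$; the theorem holds. But for $R = \bigcup_{s \in S} Q_s$, the two pockets of $\R^2 \setminus R$ touch only at the corner $(1.5,1.5) \in Q_{(1,2)} \cap Q_{(2,1)} \subset R$, so $\R^2 \setminus R$ has \emph{three} components and the inner boundary of $R$ is a wedge of two polygons, not a simple closed curve. Thus both of your key claims --- that $\partial R$ is a single polygonal simple closed curve, and that $\kappa'$-connectivity of $\Z^2 \setminus S$ matches ordinary connectivity of $\R^2 \setminus R$ --- fail, and the classical Jordan Curve Theorem cannot be invoked this way; a correct continuous model must treat corner contacts asymmetrically for curve and complement (your closing remark identifies this crux, but the closed-square construction implements exactly the wrong convention). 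Your combinatorial fallback (ray parity for separation, which is sound in outline, plus induction for connectedness of each side) is the workable skeleton, but the interior-connectedness induction is only gestured at, and it must confront precisely the diagonal-pinch configurations above.
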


One of the $\kappa'$-components of 
$\Z^2 \setminus S$ is finite and the other is infinite. This 
suggests the following.
\begin{definition}
\label{diskDef}
{\rm ~\cite{BxConvex}}
Let $S \subset \Z^2$ be a $c_2$-closed curve such that
$\Z^2 \setminus S$ has two $c_1$-components, one finite and the
other infinite. The union $D$ of $S$ and the finite $c_1$-component 
of $\Z^2 \setminus S$ is a {\em (digital) disk}. $S$ is
a {\em bounding curve} of $D$. The finite $c_1$-component 
of $\Z^2 \setminus S$ is the {\em interior of} $S$, denoted $Int(S)$,
and the infinite $c_1$-component of $\Z^2 \setminus S$ is the {\em exterior of} 
$S$, denoted $Ext(S)$.
\end{definition}

\begin{definition}
\label{thickness}
{\rm \cite{BxConvex}}
Let $X \subset \Z^2$ be a digital disk. We say $X$ is
{\em thick} if the following are satisfied. For some bounding
curve $S$ of $X$,
\begin{itemize}
    \item for every slanted segment~$S$ of $Bd_2(X)$,
 if  $p \in S$ is not an endpoint of  $S$, 
then there exists $c \in X$ such that 
(see Figure~\ref{fig:innerBdPt})
\begin{equation}
    \label{slantSegProp}
   c \adj_{c_2} p \not \adj_{c_1} c,
\end{equation}
and
\item if $p$ is the vertex of a 90$^\circ$ ($ \pi / 2$
      radians) interior angle $\theta$ of $S$, then there
      exists $q \in Int(X)$ such that
      \begin{itemize}
          \item if $\theta$ has horizontal and vertical sides then
                $q \adj_{c_2} p \not \adj_{c_1} q$ (see
                Figure~\ref{fig:degrees90a});
          \item if $\theta$ has slanted sides then
                $q \adj_{c_1} p$ (see
                Figure~\ref{fig:degrees90b});
      \end{itemize}
      and
\item if $p$ is the vertex of a 135$^\circ$ ($3 \pi / 4$
      radians) interior angle $\theta$ of $S$,
      there exist $b,b' \in X$
      such that $b$ and $b'$ are in the interior of $\theta$ and
      (see Figure~\ref{fig:degrees135c1})
      \[ b \adj_{c_2} p \not \adj_{c_1} b~~~ \mbox{ and }~~~ 
      b' \adj_{c_1} p.
       \]
\end{itemize}
\end{definition}

    \begin{figure}
        \centering
        \includegraphics[height=1in]{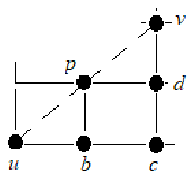}
        \caption{\cite{BxConvex}
        $p \in \overline{uv}$ in a bounding curve,
        with $\overline{uv}$ slanted.
        Note $u \not \adj_{c_1} p \not \adj_{c_1} v$,
        $p \adj_{c_2} c \not \adj_{c_1} p$,
        $\{p,c\} \subset N(\Z^2,c_1,b) \cap N(\Z^2,c_1,d)$. If
        $X$ is thick then $c \in X$.
        (Not meant to be understood as showing all of $X$.)}
        \label{fig:innerBdPt}
    \end{figure}
    
        \begin{figure}
        \centering
        \includegraphics[height=0.75in]{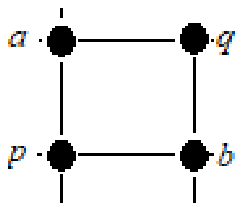}
        \caption{\cite{BxConvex} $\angle apb$ is a
        $90^{\circ}$ ($\pi/2$ radians)
        angle of a bounding curve of $X$ at $p \in A_1$, with
        horizontal and vertical sides. If $X$ is thick then
        $q \in Int(X)$. (Not meant to
        be understood as showing all of $X$.)}
        \label{fig:degrees90a}
    \end{figure}
    
       \begin{figure}
        \centering
        \includegraphics[height=1in]{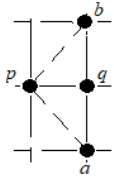}
        \caption{\cite{BxConvex} $\angle apb$ is a
        $90^\circ$ ($\pi/2$ radians) angle
         between slanted segments of a bounding curve. If $X$ is
         thick then $q \in Int(X)$. (Not meant to
        be understood as showing all of $X$).}
        \label{fig:degrees90b}
    \end{figure}

         \begin{figure}
        \centering
        \includegraphics[height=1in]{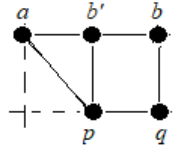}
        \caption{\cite{BxConvex} $\angle apq$ is an angle of
        135$^ \circ$ degrees ($3 \pi /4$ radians)
        of a bounding curve of $X$ at $p$, with
        $\overline{ap} \cup \overline{pq}$
            a subset of the bounding curve. If
            $X$ is thick then $b,b' \in X$. (Not meant to
        be understood as showing all of $X$.)
        }
        \label{fig:degrees135c1}
        \end{figure}

\subsection{Tools for determining fixed point sets}
Material in this section is largely quoted or
paraphrased from~\cite{BxConvex}
and other references as indicated.

The following assertions are useful in
determining fixed point and freezing sets.

\begin{prop}
\label{uniqueShortestProp}
{\rm (Corollary~8.4 of~\cite{bs19a})}
Let $(X,\kappa)$ be a digital image and
$f \in C(X,\kappa)$. Suppose
$x,x' \in \Fix(f)$ are such that
there is a unique shortest
$\kappa$-path $P$ in~$X$ from $x$ 
to $x'$. Then $P \subset \Fix(f)$.
\end{prop}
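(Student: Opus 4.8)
The plan is to transport the path $P$ through $f$ and exploit minimality together with uniqueness to pin down $f$ on $P$. Write $P = \{p_0, p_1, \ldots, p_k\}$ as a sequence with $p_0 = x$, $p_k = x'$, and $p_i \adj_{\kappa} p_{i+1}$ for each $i \in \{0, \ldots, k-1\}$; since $P$ is shortest, $k$ is the minimal length of any $\kappa$-path in $X$ from $x$ to $x'$. First I would apply $f$ to the vertices of $P$ in order, obtaining the sequence $Q = \{f(p_0), f(p_1), \ldots, f(p_k)\}$. Because $f \in C(X,\kappa)$, each adjacency $p_i \adj_{\kappa} p_{i+1}$ forces $f(p_i) \adjeq_{\kappa} f(p_{i+1})$, which is exactly the condition in the definition of a $\kappa$-path. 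Moreover $f(p_0) = f(x) = x$ and $f(p_k) = f(x') = x'$ since $x, x' \in \Fix(f)$, so $Q$ is a $\kappa$-path in $X$ from $x$ to $x'$.

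Next I would observe that $Q$, as an indexed sequence of $k+1$ terms, is a path of length $k$. Since $k$ is the minimal possible length of a $\kappa$-path from $x$ to $x'$, the path $Q$ is itself a \emph{shortest} $\kappa$-path from $x$ to $x'$. Here it is worth noting that a shortest path cannot repeat a vertex: if $f(p_i) = f(p_j)$ with $i < j$, then deleting the terms strictly between indices $i$ and $j$ would yield a $\kappa$-path from $x$ to $x'$ of length $k - (j - i) < k$, contradicting minimality; so $Q$ has distinct vertices and genuinely realizes length $k$.

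Finally I would invoke the uniqueness hypothesis. The path $Q$ is a shortest $\kappa$-path in $X$ from $x$ to $x'$, and by assumption $P$ is the unique such path; hence $Q = P$ as sequences, which means $f(p_i) = p_i$ for every $i \in \{0, \ldots, k\}$. Therefore every point of $P$ is fixed by $f$, i.e.\ $P \subset \Fix(f)$.

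I expect the only delicate point to be the step identifying $Q$ as a shortest path and ruling out degenerate collapses — that is, confirming that continuity cannot let the image sequence ``shortcut'' to a strictly shorter path, which would otherwise break the term-by-term matching of $Q$ with $P$. Once $Q$ is known to be a shortest path, the uniqueness assumption does the remaining work immediately, and the identification of the two endpoints relies only on $x, x' \in \Fix(f)$.
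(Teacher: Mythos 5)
Your proof is correct and follows essentially the same route as the source the paper cites for this result (Corollary~8.4 of Boxer--Staecker, the paper itself giving no proof): continuity sends the sequence $P$ to a $\kappa$-path $Q$ from $x$ to $x'$ of length $k$, minimality of $k$ makes $Q$ shortest (your vertex-deletion argument correctly rules out collapses, since the paper's path definition uses $\adjeq$), and uniqueness forces $Q = P$ term by term, so $P \subset \Fix(f)$. Nothing further is needed.
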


Lemma~\ref{cuPulling} below is in the spirit of ``pulling" as
introduced in~\cite{hmps}.
We quote~\cite{BxFpSets}:
\begin{quote}
    The following assertion can
be interpreted to say that
in a $c_u$-adjacency,
a continuous function that moves
a point~$p$ also [pulls along]
a point that is ``behind"
$p$. E.g., in $\Z^2$, if $q$ and $q'$ are
$c_1$- or $c_2$-adjacent with $q$
left, right, above, or below $q'$, and a
continuous function $f$ moves $q$ to the left,
right, higher, or lower, respectively, then
$f$ also moves $q'$ to the left,
right, higher, or lower, respectively.
\end{quote}

\begin{lem}
\label{cuPulling}
{\rm \cite{BxFpSets}}
Let $(X,c_u)\subset \Z^n$ be a digital image, 
$1 \le u \le n$. Let $q, q' \in X$ be such that
$q \adj_{c_u} q'$.
Let $f \in C(X,c_u)$.
\begin{enumerate}
    \item If $p_i(f(q)) > p_i(q) > p_i(q')$
          then $p_i(f(q')) > p_i(q')$.
    \item If $p_i(f(q)) < p_i(q) < p_i(q')$
          then $p_i(f(q')) < p_i(q')$.
\end{enumerate}
\end{lem}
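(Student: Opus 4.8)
The plan is to prove part~(1) by a short chain of integer inequalities and then obtain part~(2) by the symmetric argument with all inequalities reversed. The two facts driving the argument are that $c_u$-adjacency forces corresponding coordinates to differ by at most~$1$ in absolute value, and that continuity of $f$ forces $f(q) \adjeq_{c_u} f(q')$, which bounds the image coordinates in exactly the same way.

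First I would record what the adjacency of $q$ and $q'$ gives. Since $q \adj_{c_u} q'$, each coordinate of $q$ differs from the corresponding coordinate of $q'$ by at most~$1$; in particular $|p_i(q) - p_i(q')| \le 1$. Combined with the hypothesis $p_i(q) > p_i(q')$ and the fact that all coordinates are integers, this forces the sharp equality $p_i(q) = p_i(q') + 1$.

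Next I would pass to the image. Because $p_i(f(q)) > p_i(q) = p_i(q') + 1$ with both sides integers, we get $p_i(f(q)) \ge p_i(q') + 2$. By the adjacency characterization of continuity, $q \adj_{c_u} q'$ yields $f(q) \adjeq_{c_u} f(q')$, so in either case (whether $f(q) = f(q')$ or $f(q) \adj_{c_u} f(q')$) we have $|p_i(f(q)) - p_i(f(q'))| \le 1$. Hence
\[ p_i(f(q')) \ge p_i(f(q)) - 1 \ge (p_i(q') + 2) - 1 = p_i(q') + 1 > p_i(q'), \]
which is exactly the conclusion of~(1). Part~(2) then follows by the identical computation with each ``$>$'' replaced by ``$<$'' (equivalently, by applying~(1) to the reflected coordinate $-p_i$).

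I do not expect a genuine obstacle here; the only point requiring care is the consistent use of integrality — a strict inequality between integers yields a gap of at least~$1$ — together with the principle that $c_u$-adjacency of $q,q'$ (and hence adjacency-or-equality of their images under the continuous $f$) keeps each coordinate difference within~$1$. These two observations are precisely what convert the strict hypotheses into the strict conclusion.
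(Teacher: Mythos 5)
Your proof is correct: the two ingredients you isolate --- that $c_u$-adjacency bounds each coordinate difference by $1$ (so integrality upgrades $p_i(q) > p_i(q')$ to $p_i(q) = p_i(q')+1$), and that continuity gives $f(q) \adjeq_{c_u} f(q')$ and hence $|p_i(f(q)) - p_i(f(q'))| \le 1$ --- yield the strict conclusion exactly as you compute, and the reflection argument disposes of part~(2). Note that this paper itself offers no proof (the lemma is quoted from~\cite{BxFpSets}), and your argument coincides with the standard one given there, so there is nothing to flag.
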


\begin{figure}
    \centering
    \includegraphics[height=1in]{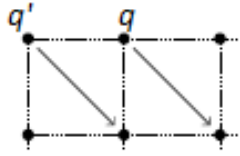}
    \caption{\cite{BxConvex}
    Illustration of Lemma~\ref{cuPulling}. Arrows show
    the images of $q,q'$ under $f \in C(X,c_2)$. Since
    $f(q)$ is to the right of $q$ and $q' \adj_{c_1,c_2} q$ with
    $q'$ to the left of $q$, $f$ pulls $q'$ to the right so that
    $f(q')$ is to the right of $q'$.
    }
    \label{fig:pull}
\end{figure}

Figure~\ref{fig:pull} illustrates Lemma~\ref{cuPulling}.

\begin{comment}
\begin{thm}
\label{bdFreezes}
{\rm \cite{BxFpSets}}
Let $X \subset \Z^n$ be finite. Then 
for $1 \le u \le n$, $Bd_1(X)$ is 
a freezing set for $(X,c_u)$.
\end{thm}
\end{comment}

\begin{thm}
\label{bdCurveFreezeSet}
{\rm \cite{BxConvex}}
Let $D$ be a digital disk in $\Z^2$. Let
$S$ be a bounding curve for $D$. Then $S$ is
a freezing set for $(D,c_1)$ and for $(D,c_2)$.
\end{thm}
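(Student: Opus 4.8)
The plan is to prove that if $f \in C(D,c_i)$ (for $i \in \{1,2\}$) satisfies $S \subset \Fix(f)$, then $f$ fixes every point of $D$, i.e.\ $f = \id_D$. The strategy deliberately avoids any appeal to uniqueness of shortest paths (which fails for $c_2$) and instead rests entirely on the pulling Lemma~\ref{cuPulling}, so that a single argument handles both adjacencies. Concretely, I would show that $f$ can change neither coordinate of any point: since $f(p) = (p_1(f(p)),\, p_2(f(p)))$, establishing $p_1 \circ f = p_1$ and $p_2 \circ f = p_2$ forces $f(p) = p$ for every $p \in D$.

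First I would isolate the geometric fact that transmits fixedness from $S$ into $D$: if $x \in D$ has a $c_1$-neighbor $y \in \Z^2 \setminus D$, then $x \in S$. This is immediate from Definition~\ref{diskDef}: one has $\Z^2 \setminus D = Ext(S)$, while $Int(S)$ and $Ext(S)$ are distinct $c_1$-components of $\Z^2 \setminus S$, so no point of $Int(S)$ can be $c_1$-adjacent to $Ext(S)$; hence $x$ lies on $S$, where it is fixed by hypothesis.

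Next, suppose for contradiction that $f$ moves some $p$ strictly to the right, $p_1(f(p)) > p_1(p)$. Since $D$ is finite, scanning leftward along the horizontal line through $p$ eventually leaves $D$, so there is a run $p_L = q_0, q_1, \ldots, q_k = p$ of consecutive horizontal ($c_1$-, hence $c_i$-) neighbors in $D$ with $p_1(q_0) < \cdots < p_1(q_k)$ and $q_0 - (1,0) \notin D$. By the fact above, $p_L \in S$, so $f(p_L) = p_L$. Applying part~(1) of Lemma~\ref{cuPulling} repeatedly down the run---at each step $p_1(f(q_j)) > p_1(q_j) > p_1(q_{j-1})$ yields $p_1(f(q_{j-1})) > p_1(q_{j-1})$---gives $p_1(f(p_L)) > p_1(p_L)$, contradicting $f(p_L) = p_L$. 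Thus $f$ never increases a first coordinate; the symmetric argument using the right end of the run together with part~(2) of the lemma shows it never decreases one, so $p_1 \circ f = p_1$. Running the identical argument along vertical runs with $i = 2$ gives $p_2 \circ f = p_2$, whence $f = \id_D$.

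The step I expect to be the crux is the linking fact that forces the endpoint of each horizontal (or vertical) run onto $S$, rather than merely into $Bd_1(D)$: this is precisely what lets the hypothesis $S \subset \Fix(f)$ feed the pulling argument, and it is where the Jordan-curve separation built into Definition~\ref{diskDef} is genuinely used. Working with \emph{maximal runs} instead of the global extreme point on a line is what makes the argument robust to non-convex disks, whose horizontal slices need not be intervals; once runs are used, no convexity or thickness assumption is required, matching the full generality of the statement.
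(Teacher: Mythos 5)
Your proof is correct, and there is no gap: the component-separation fact ($Int(S)$ and $Ext(S)$ are distinct $c_1$-components of $\Z^2 \setminus S$, so any point of $D$ with a $c_1$-neighbor outside $D$ lies on $S$) is sound, and each inductive application of Lemma~\ref{cuPulling} along a maximal horizontal or vertical run, terminating at a fixed endpoint on $S$, is valid for both $c_1$ and $c_2$. Note that this paper only quotes the theorem from~\cite{BxConvex} without reproducing a proof; your argument is essentially the one in the cited literature, which derives the theorem from the result of~\cite{BxFpSets} that $Bd_1(D)$ is a freezing set for $(D,c_u)$ --- itself proved by exactly your run-and-pulling argument --- together with your ``crux'' observation, which amounts to $Bd_1(D) \subset S$, so that $S$, containing a freezing set, is a freezing set. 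You have simply inlined those two steps into a single self-contained proof.
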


\begin{lem}
\label{c2shortSlant}
Let $X \subset \Z^2$ and let $a,b \in X$ be
such that $a$ and $b$ are endpoints of a
slanted digital line segment $P \subset X$.
Let $f \in C(X,c_2)$ such that 
$\{a,b\} \subset \Fix(f)$. 
Then $P \subset \Fix(f)$.
\end{lem}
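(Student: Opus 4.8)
The plan is to reduce the assertion to an application of Proposition~\ref{uniqueShortestProp}. Since $a,b \in \Fix(f)$ and $f \in C(X,c_2)$, it suffices to show that $P$ is the \emph{unique} shortest $c_2$-path in $X$ from $a$ to $b$; the proposition then yields $P \subset \Fix(f)$ immediately. In this way the whole argument becomes a statement about the $c_2$-metric on $\Z^2$ and does not otherwise invoke continuity.

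First I would fix coordinates. Writing $a=(a_1,a_2)$ and $b=(b_1,b_2)$, the hypothesis that $P$ is a slanted segment (Remark~\ref{segSlope}) means $|b_1-a_1|=|b_2-a_2|=k$ for some positive integer $k$, and $P$ consists of the $k+1$ collinear lattice points obtained by stepping diagonally from $a$ to $b$; consecutive points of $P$ are $c_2$-adjacent, so $P$ is a $c_2$-path of length $k$. By relabeling coordinates and reflecting if necessary, I may assume the slope is $+1$ and $b_1>a_1$, so that $b=a+(k,k)$ and the points of $P$ are $a+j(1,1)$ for $j\in\{0,\dots,k\}$.

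Next I would establish that $k$ is the $c_2$-distance from $a$ to $b$ and that $P$ is the only path attaining it. For any $c_2$-path $Q=\{q_j\}_{j=0}^{m}$ from $a$ to $b$, each step satisfies $|p_1(q_{j+1})-p_1(q_j)|\le 1$, so telescoping gives $k=|b_1-a_1|\le m$; hence no $c_2$-path is shorter than $P$. For uniqueness, suppose $Q$ has length $m=k$. Then
\[
k=p_1(b)-p_1(a)=\sum_{j=0}^{k-1}\bigl(p_1(q_{j+1})-p_1(q_j)\bigr)
\]
is a sum of $k$ terms each at most $1$, so every term equals $1$; that is, $p_1(q_{j+1})=p_1(q_j)+1$ for all $j$. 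The identical argument in the second coordinate forces $p_2(q_{j+1})=p_2(q_j)+1$, so $q_{j+1}=q_j+(1,1)$ and hence $q_j=a+j(1,1)$ for every $j$. This shows $Q=P$.

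Finally, since $P\subset X$ and every $c_2$-path in $X$ is also a $c_2$-path in $\Z^2$, the uniqueness of the shortest path in $\Z^2$ gives that $P$ is likewise the unique shortest $c_2$-path from $a$ to $b$ within $X$, and Proposition~\ref{uniqueShortestProp} yields $P\subset\Fix(f)$. The only step requiring care is the uniqueness claim, but the coordinate-summation argument above handles it cleanly; the main thing to watch is the four possible orientations of a slanted segment, which the initial normalization reduces to the single case $b=a+(k,k)$.
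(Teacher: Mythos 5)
Your proof is correct and takes essentially the paper's own route: the paper's ``proof'' is a citation to the proof of Theorem~4.2 of~\cite{BxConvex}, where the slanted segment is handled exactly as you do --- by observing it is the unique shortest $c_2$-path between its endpoints and invoking Corollary~8.4 of~\cite{bs19a} (Proposition~\ref{uniqueShortestProp} here), the same reduction this paper applies to slanted segments in the proof of Theorem~\ref{slantCorners}. Your coordinate-telescoping argument for uniqueness is sound, and it correctly covers the paper's path convention permitting stationary steps (consecutive path points with $q_{j+1}=q_j$), since any step contributing $0$ to the first-coordinate sum would make it impossible for the $k$ terms to total $k$.
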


\begin{proof}
This assertion was proven in the proof of Theorem~4.2 of~\cite{BxConvex}.
\end{proof}

We will use the following.
\begin{definition}
{\rm \cite{BxConvex}}
\label{closeNbrDef}
Let $(X,\kappa)$ be a digital image. Let
$p,q \in X$ such that 
\[ N(X,p,\kappa) \subset N^*(X,q,\kappa).
\]
Then $q$ is a {\em close $\kappa$-neighbor}
of $p$.
\end{definition}

We say $X \subset \Z^2$ is 
\begin{itemize}
    \item {\em symmetric with respect to the $x$-axis} if $(x,y) \in X$ implies $(x,-y) \in X$;
    \item {\em symmetric with respect to the $y$-axis} if $(x,y) \in X$ implies $(-x,y) \in X$;
    \item {\em symmetric with respect to the origin} if $(x,y) \in X$ implies $(-x,-y) \in X$.
\end{itemize}

\begin{prop}
Let $X$ be a digital image.
\begin{itemize}
    \item Suppose $X \subset \Z^2$ is symmetric with respect to the $x$-axis. If $p=(x,y) \in X$
          has a close $c_i$-neighbor in $X$, then $p'=(x,-y)$ has a close $c_i$-neighbor,
          $i \in \{1,2\}$.
    \item Suppose $X \subset \Z^2$ is symmetric with respect to the $y$-axis. If $p=(x,y) \in X$
          has a close $c_i$-neighbor in $X$, then $p'=(-x,y)$ has a close $c_i$-neighbor,
          $i \in \{1,2\}$.
    \item Suppose $X \subset \Z^n$ is symmetric with respect to the origin and $1 \le u \le n$.
          If $p = (x,y) \in X$ has a close $c_u$ neighbor in $X$, then $p'=(-x,-y)$
           has a close $c_u$ neighbor in $X$.
\end{itemize}
\end{prop}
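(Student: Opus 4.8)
The plan is to treat each of the three symmetries as a digital isomorphism of $(X,\kappa)$ and then to observe that the close-neighbor relation is automatically preserved by any such isomorphism. For the first item I would take $\phi \colon \Z^2 \to \Z^2$ to be the reflection $\phi(x,y) = (x,-y)$; for the second, $\phi(x,y) = (-x,y)$; and for the third, $\phi \colon \Z^n \to \Z^n$ the antipodal map $\phi(z) = -z$. In each case the symmetry hypothesis on $X$ says precisely that $\phi(X) \subset X$, and since every one of these maps is an involution, $\phi$ restricts to a bijection of $X$ onto itself.

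First I would check that $\phi$ preserves the relevant adjacency. This is immediate from the definition of $c_u$-adjacency: each $\phi$ merely changes the signs of some coordinates, and since $|(-a)-(-b)| = |a-b|$, the list of coordinatewise absolute differences $|z_j - z_j'|$ is unchanged by $\phi$. Hence $z \adj_{\kappa} z'$ if and only if $\phi(z) \adj_{\kappa} \phi(z')$, so $\phi$ is a digital isomorphism of $(X,\kappa)$ onto itself.

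Next I would record the elementary fact that such an isomorphism carries neighborhoods to neighborhoods. Because $\phi$ is a bijection of $X$ preserving adjacency in both directions (and $\phi^{-1} = \phi$), for every $p \in X$ one has $\phi(N(X,\kappa,p)) = N(X,\kappa,\phi(p))$ and $\phi(N^*(X,\kappa,p)) = N^*(X,\kappa,\phi(p))$. Now suppose $q \in X$ is a close $\kappa$-neighbor of $p$, that is, $N(X,\kappa,p) \subset N^*(X,\kappa,q)$. Applying the bijection $\phi$ to both sides of this inclusion and invoking the two displayed identities gives $N(X,\kappa,\phi(p)) \subset N^*(X,\kappa,\phi(q))$, so $\phi(q)$ is a close $\kappa$-neighbor of $\phi(p)$. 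Since $\phi(p) = p'$ in each of the three cases and $\phi(q) \in X$, this is exactly the desired conclusion.

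I do not anticipate a serious obstacle; the whole argument rests on the two routine verifications above, namely that each symmetry map preserves $c_u$-adjacency and that a digital isomorphism transports the close-neighbor relation. The only point needing a little care is confirming that $\phi$ sends $X$ into $X$, so that the neighborhoods computed relative to $X$ transform correctly under $\phi$; this is precisely the content of the symmetry hypothesis combined with the fact that each $\phi$ is its own inverse.
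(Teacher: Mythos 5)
Your proof is correct, and it takes the same route the paper does: the paper's entire proof is the one-line remark that the assertions ``follow easily from Definition~\ref{closeNbrDef},'' and your argument---that each symmetry is an adjacency-preserving involution of $X$ (since negating coordinates leaves the absolute differences $|z_j - z_j'|$ unchanged), hence carries $N(X,\kappa,p)$ and $N^*(X,\kappa,q)$ to the corresponding neighborhoods of the reflected points and so transports the inclusion defining a close neighbor---is exactly the verification the paper leaves to the reader, carried out carefully. No gaps; your attention to $\phi(X) \subset X$ plus involutivity giving a bijection of $X$ is precisely the point that makes the neighborhood identities legitimate.
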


\begin{proof}
These assertions follow easily from Definition~\ref{closeNbrDef}.
\end{proof}

Note these assertions are easily generalized to symmetry with respect to an arbitrary
horizontal line, vertical line, or point, respectively.

\begin{exl}
A point $p$ with a close $\kappa$-neighbor $q$ need not be $\kappa$-adjacent to $q$.
In the disk shown in Figure~\ref{fig:convexRlts}, $(1,1)$ is a close $c_1$-neighbor of $(0,0)$ but
$(0,0)$ and $(1,1)$ are not $c_1$-adjacent. In the $c_2$-curve
\[X = \{(1,0), (0,1), (-1,0), (0,-1)\},\]
$(-1,0)$ is a close $c_2$-neighbor of $(1,0)$, but
$(1,0)$ and $(-1,0)$ are not $c_2$-adjacent.
\end{exl}

\begin{lem}
{\rm \cite{bs19a,BxConvex}}
\label{closeNbr}
Let $(X,\kappa)$ be a digital image. Let
$p,q \in X$ such that $q$ is a close 
$\kappa$-neighbor of $p$. Then $p$ belongs to every
freezing set of $(X,\kappa)$.
\end{lem}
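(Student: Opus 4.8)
The plan is to argue by contrapositive: I would suppose that some set $A$ is a freezing set for $(X,\kappa)$ with $p \notin A$, and then produce a continuous self-map of $X$ that fixes every point of $A$ but is not $\id_X$, contradicting the defining property of a freezing set. The only extra assumption I invoke is $q \neq p$, which is implicit in the close-neighbor relation as used throughout (see the examples following Definition~\ref{closeNbrDef}); if $q = p$ the statement is vacuous.

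The natural witness is the map that collapses $p$ onto its close neighbor $q$ and fixes everything else:
\[
f(x) = \begin{cases} q & \text{if } x = p, \\ x & \text{if } x \neq p. \end{cases}
\]
Then $\Fix(f) = X \setminus \{p\}$, so any $A$ with $p \notin A$ satisfies $A \subset \Fix(f)$, while $f \neq \id_X$ since $f(p) = q \neq p$. Thus, once $f$ is shown to be continuous, $f$ demonstrates that $A$ cannot be a freezing set, forcing $p \in A$ for every freezing set $A$.

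The heart of the argument, and the only place the close-neighbor hypothesis is used, is verifying $f \in C(X,\kappa)$. By the adjacency characterization of continuity it suffices to check that $x \adj_\kappa x'$ implies $f(x) \adjeq_\kappa f(x')$. If neither $x$ nor $x'$ equals $p$, this is immediate because $f$ restricts to the identity on both points. If one of them, say $x$, equals $p$, then $x' \adj_\kappa p$ gives $x' \in N(X,p,\kappa)$; the defining containment $N(X,p,\kappa) \subset N^*(X,q,\kappa)$ then yields $x' \adjeq_\kappa q = f(p) = f(x)$, which is exactly the required relation $f(x) \adjeq_\kappa f(x')$. This establishes continuity and completes the contradiction.

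I expect the continuity verification at $p$ to be the one genuinely delicate step: it is precisely here that the close-neighbor condition does its work, guaranteeing that rerouting $p$ to $q$ preserves every adjacency because each former neighbor of $p$ remains $\adjeq_\kappa$-related to $q$. The remaining steps — that $f$ fixes $A$ and differs from $\id_X$ — are routine bookkeeping.
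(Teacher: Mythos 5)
Your proof is correct, and it is the standard argument: the paper itself states Lemma~\ref{closeNbr} with a citation to \cite{bs19a,BxConvex} rather than proving it, and the proof in those references is exactly your construction --- collapse $p$ onto $q$, fix everything else, and verify continuity via $N(X,p,\kappa) \subset N^*(X,q,\kappa)$, which yields a non-identity map fixing any candidate freezing set omitting $p$. Your explicit handling of the implicit requirement $q \neq p$ (without which the map would be $\id_X$ and, worse, the lemma would fail since every point trivially satisfies the containment with $q=p$) is a point the sources leave tacit, so no gap remains.
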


However, in general a point of a freezing set for $(X,\kappa)$ need not have a close 
$\kappa$-neighbor in~$X$, as shown by the following.

\begin{exl}
\label{3cubeExl}
Let $X = [0,1]_{\Z}^3$. Let
\[ A = \{(0,0,0), (0,1,1), (1,0,1), (1,1,0)\}.
\]
See Figure~\ref{fig:unit3cube}. 
Then $A$ is a minimal freezing set for $(X,c_1)$~\cite{BxFpSets}.
However, it is easily seen that no member of $A$ has a close $c_1$-neighbor in~$X$.
\end{exl}

\begin{figure}
    \centering
    \includegraphics{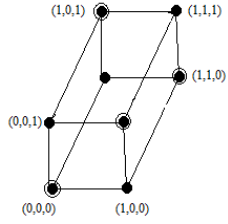}
    \caption{
    %\cite{BxFpSets} 
    The unit 3-cube $X$, image of Example~\ref{3cubeExl}.
    Circled points make up a minimal $c_1$-freezing set, no member of which has
    a close $c_1$-neighbor in~$X$.
    }
    \label{fig:unit3cube}
\end{figure}

\section{$c_1$ results}
In this section, we obtain results for freezing sets $(X,c_1)$, with
$X \subset \Z^2$.

\begin{thm}
{\rm \cite{BxConvex}}
\label{convDiskThmActual}
Let $X$ be a thick convex disk with a
    bounding curve $S$.
    Let $A_1$ be the set of points $x \in S$ such that
$x$ is an endpoint of a maximal horizontal or a
maximal vertical edge of $S$. Let $A_2$ 
be the union of slanted line segments in $S$.
Then $A = A_1 \cup A_2$ is a minimal 
freezing set for $(X,c_1)$ (see  
Figure~\ref{fig:convexRlts}(ii)).
\end{thm}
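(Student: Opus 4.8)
The plan is to prove the two halves of the statement --- that $A$ is a freezing set, and that it is minimal --- separately, using the bounding curve $S$ as an intermediary. For the freezing-set half, I would show that any $f \in C(X,c_1)$ with $A \subset \Fix(f)$ must fix all of $S$, and then invoke Theorem~\ref{bdCurveFreezeSet} to conclude $f = \id_X$. Since $X$ is a convex disk, $S$ decomposes into maximal horizontal, vertical, and slanted segments (Remark~\ref{segSlope}), and every vertex of $S$ is an endpoint of at least one such segment, so $A = A_1 \cup A_2$ contains every vertex of $S$.

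For the slanted part, I would first observe that under the $c_1$-adjacency the consecutive points of a slanted segment are only $c_2$-adjacent, so such a segment is $c_1$-disconnected and its interior points cannot be recovered from its endpoints by any pulling argument; this is exactly why $A_2$ is the full union of the slanted segments rather than just their endpoints, and these points lie in $\Fix(f)$ by hypothesis. For each maximal horizontal (resp.\ vertical) edge $E$ of $S$, both endpoints lie in $A_1 \subset \Fix(f)$, and the straight path along $E$ is the unique shortest $c_1$-path in $X$ between them: its length equals the Manhattan distance between the endpoints, and any $c_1$-path of that length must step monotonically along the edge. Proposition~\ref{uniqueShortestProp} then forces $E \subset \Fix(f)$. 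Assembling the slanted segments with the horizontal and vertical edges gives $S \subset \Fix(f)$, completing this half.

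For minimality I would prove the stronger statement that every point of $A$ has a close $c_1$-neighbor in $X$, whence Lemma~\ref{closeNbr} places every point of $A$ in \emph{every} freezing set; since $A$ is itself a freezing set, it is then contained in the intersection of all freezing sets and so is minimal. The verification is a case analysis over the types of points of $A$, in each case using the matching clause of the thickness hypothesis (Definition~\ref{thickness}) to produce a neighbor $q \neq p$ with $N(X,c_1,p) \subset N^*(X,c_1,q)$: an interior point $p$ of a slanted segment uses the diagonal point $c$ of condition~(\ref{slantSegProp}); a $90^\circ$ vertex with horizontal and vertical sides uses the interior diagonal point $q$; a $90^\circ$ vertex with slanted sides uses the $c_1$-adjacent interior point; and a $135^\circ$ vertex uses the diagonal point $b$. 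In each instance one checks, in local coordinates, that the two $c_1$-neighbors of $p$ lying in $X$ both belong to $N^*(X,c_1,q)$.

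The main obstacle is this last case analysis. The delicate points are (i) confirming that the interior points guaranteed by thickness are genuinely \emph{close} neighbors, i.e.\ that the chosen $q$ $c_1$-dominates \emph{both} of the in-$X$ neighbors of the boundary point and not merely one of them, and (ii) checking that the vertices of a thick convex disk are exhausted by the $90^\circ$ and $135^\circ$ cases appearing in Definition~\ref{thickness}, so that no vertex of $A$ is left without a close neighbor. By contrast, verifying that the interior points of the maximal horizontal and vertical edges --- which are deliberately excluded from $A$ --- have \emph{no} close $c_1$-neighbor is routine and confirms that nothing further can be dropped.
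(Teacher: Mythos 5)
First, a point about the comparison itself: this paper does not prove Theorem~\ref{convDiskThmActual} --- it is quoted from~\cite{BxConvex} --- so there is no in-paper proof to measure yours against. Judged against the machinery the paper does import, your argument is sound and is essentially the argument that machinery exists to support. The freezing-set half is correct: $A_2 \subset \Fix(f)$ by hypothesis; each maximal horizontal or vertical edge of $S$ joins two points of $A_1$, and since a $c_1$-path realizing the Manhattan distance between two points sharing a coordinate must step monotonically in the other coordinate, the edge is the unique shortest $c_1$-path in $X$ between its endpoints, so Proposition~\ref{uniqueShortestProp} fixes it; every point of $S$ lies on a maximal horizontal, vertical, or slanted segment, so $S \subset \Fix(f)$, and Theorem~\ref{bdCurveFreezeSet} gives $f = \id_X$. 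Your proof correctly makes visible that thickness plays no role in this half and is needed only for minimality. The minimality half is also correctly structured: in each case the thickness witness really is a close $c_1$-neighbor, because the exterior-side $c_1$-neighbors of a boundary point lie in $Ext(S)$ and hence outside $X$, so $N(X,c_1,p)$ consists of at most the one or two interior-side neighbors, each of which the diagonal (or $c_1$-adjacent interior) witness of Definition~\ref{thickness} dominates; Lemma~\ref{closeNbr} then places all of $A$ in \emph{every} freezing set, which together with the first half gives minimality (indeed, uniqueness of the minimal freezing set).

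Two small repairs. First, the worry you flag in point (ii) is real but easily discharged: the interior angle of a convex digital disk at a vertex is $45^\circ$, $90^\circ$, or $135^\circ$, and Definition~\ref{thickness} is silent about $45^\circ$ vertices, which \emph{can} occur in thick convex disks (e.g., two slanted edges meeting a horizontal base). But at a $45^\circ$ vertex $p$, say between a horizontal edge and a slanted edge, the only $c_1$-neighbor of $p$ in $X$ is the adjacent point $q$ of the horizontal edge, so $N(X,c_1,p) = \{q\} \subset N^*(X,c_1,q)$ trivially and $p$ has a close $c_1$-neighbor without any appeal to thickness; your case analysis should include this case explicitly rather than leaving it as an open concern. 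Second, your closing remark is misdirected: verifying that interior points of horizontal and vertical edges have \emph{no} close $c_1$-neighbor contributes nothing to minimality, since Lemma~\ref{closeNbr} gives only a sufficient condition for membership in all freezing sets (Example~\ref{3cubeExl} in the paper shows points without close neighbors can still be in every freezing set). Your argument does not need it: once every freezing set contains $A$ and $A$ is a freezing set, minimality is complete.
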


\begin{figure}
    \centering
    \includegraphics{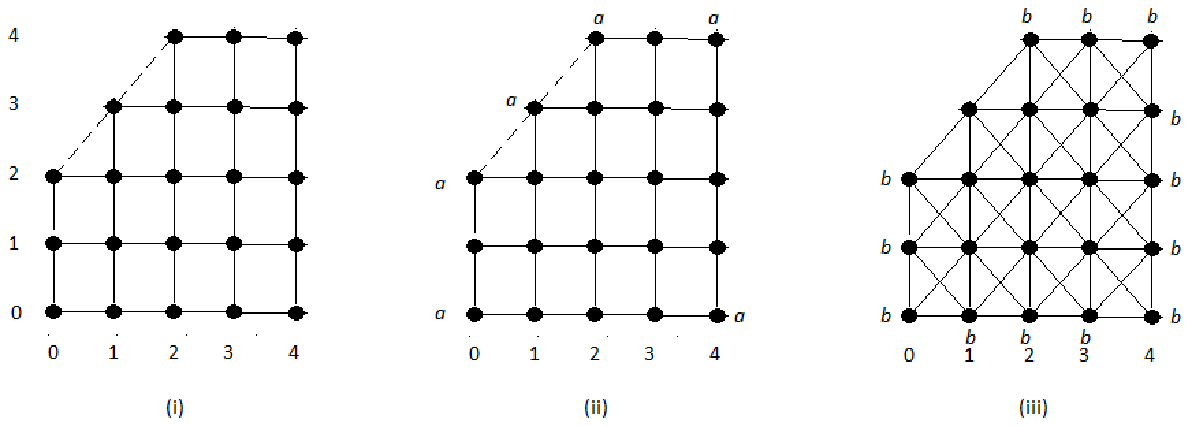}
    \caption{\cite{BxConvex} The convex disk 
    $D = [0,4]_{\Z}^2 \setminus \{(0,3),(0,4),(1,4)\}$. The dashed
    segment from $(0,2)$ to $(2,4)$ shown in (i) and (ii) indicates
    part of a bounding curve and not $c_1$-adjacencies. \newline
    (i) $D$ with a $c_2$ bounding curve. \newline
    (ii) $(D,c_1)$ with members of a minimal freezing set $A$
    marked ``{\em a}" - these are the endpoints of the maximal 
    horizontal and vertical segments of the bounding curve, 
    and all points of the slanted segment of the bounding curve,
    per Theorem~\ref{convDiskThmActual}. \newline
    (iii) $(D,c_2)$ with members of a minimal freezing set 
    $B$ marked ``{\em b}" - these are the endpoints of the maximal
    slanted edge and all the points of the horizontal and vertical
    edges of the bounding curve,
    per Theorem~\ref{convDiskThmC2Actual}.
    }
    \label{fig:convexRlts}
\end{figure}

\begin{thm}
\label{corners}
Let $V_i \subset X \subset \Z^2$, $i \in \{1,\ldots,n\}$ where
each $V_i$ is a thick convex disk. 
Let $X' = \bigcup_{i=1}^n V_i$.
Let $C_i$ be a bounding curve of $V_i$.
Let $A_{1,i}$ be the set of endpoints of maximal horizontal or vertical
segments of $C_i$.
Let $A_{2,i}$ be the union of maximal slanted
segments of $C_i$.
Then
$A = (X \setminus X') \cup \bigcup_{i=1}^n (A_{1,i} \cup A_{2,i})$ is a freezing set for $(X,c_1)$.
\end{thm}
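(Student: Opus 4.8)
The plan is to reduce the assertion to the single-disk case governed by Theorem~\ref{convDiskThmActual}, but to carry it out through Proposition~\ref{uniqueShortestProp}, which is a statement about the ambient image $(X,c_1)$ rather than about self-maps of the individual disks. Let $f \in C(X,c_1)$ with $A \subseteq \Fix(f)$; I must show $f = \id_X$. Since $X \setminus X' \subseteq A \subseteq \Fix(f)$, every point outside the disks is already fixed, so it suffices to prove $V_i \subseteq \Fix(f)$ for each $i$.

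Fix $i$ and first show the bounding curve $C_i \subseteq \Fix(f)$. Each maximal slanted segment of $C_i$ lies in $A_{2,i} \subseteq \Fix(f)$, so its points are fixed outright. For each maximal horizontal or vertical segment of $C_i$, its two endpoints lie in $A_{1,i} \subseteq \Fix(f)$; two points on a common horizontal (resp.\ vertical) line have the horizontal (resp.\ vertical) segment joining them as their unique shortest $c_1$-path in $\Z^2$, since any deviation from that line strictly increases the $c_1$-length, so this segment is the unique shortest $c_1$-path in $X$ between the endpoints. Proposition~\ref{uniqueShortestProp} then forces the whole segment into $\Fix(f)$. As $C_i$ is the union of its maximal horizontal, vertical, and slanted segments (convexity; Remark~\ref{segSlope}), we conclude $C_i \subseteq \Fix(f)$.

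Next I will fix the interior of $V_i$ by a horizontal sweep. Let $p = (a,b) \in V_i$. By convexity, the line $y = b$ meets $V_i$ in a single segment with endpoints $L = (a_1,b)$ and $R = (a_2,b)$, the extreme points of $V_i$ on that line; each of these lies on a vertical or slanted segment of $C_i$ or is a corner of $C_i$, hence $L, R \in C_i \subseteq \Fix(f)$. As above, this horizontal chord is the unique shortest $c_1$-path in $X$ from $L$ to $R$, and it contains $p$, so Proposition~\ref{uniqueShortestProp} gives $p \in \Fix(f)$. Ranging over all $p \in V_i$ yields $V_i \subseteq \Fix(f)$, and ranging over $i$ together with $X \setminus X' \subseteq \Fix(f)$ gives $f = \id_X$.

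The main obstacle I anticipate is precisely the one this routing is designed to sidestep: because $f$ need not carry $V_i$ into $V_i$, one cannot simply invoke that $A_{1,i} \cup A_{2,i}$ is a freezing set for the isolated image $(V_i,c_1)$ via Theorem~\ref{convDiskThmActual}. The point making the argument robust is that the only shortest-path uniqueness I rely on is for horizontal and vertical segments, whose uniqueness is forced by the $c_1$-metric on $\Z^2$ and is therefore insensitive to whatever extra points $X$ contributes; this is also why slanted segments must be included wholesale in $A_{2,i}$, a slanted segment being far from the unique shortest $c_1$-path between its endpoints. The remaining care is bookkeeping: verifying that the extreme points $L,R$ of each horizontal chord genuinely lie on $C_i$ and were captured in the previous step, which follows from the convex disk's bounding curve being built solely from horizontal, vertical, and slanted segments.
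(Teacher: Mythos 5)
Your proof is correct and takes essentially the same route as the paper's: fix each bounding curve $C_i$ by combining $A_{2,i} \subseteq \Fix(f)$ with Proposition~\ref{uniqueShortestProp} applied to the horizontal and vertical edges whose endpoints lie in $A_{1,i}$, then fill the convex interior by horizontal chords between points of $C_i$ (again via Proposition~\ref{uniqueShortestProp}), and conclude with $X \setminus X' \subseteq A \subseteq \Fix(f)$. Your explicit observation that the shortest-path uniqueness for axis-parallel segments holds in the ambient $(X,c_1)$ --- so the argument is immune to $f$ possibly mapping points of $V_i$ outside $V_i$, which is why one cannot just cite Theorem~\ref{convDiskThmActual} --- is a point the paper leaves implicit, but the argument is the same.
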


\begin{proof}
Let $f \in C(X,c_1)$ such that 
$A \subset \Fix(f)$. For each~$i$, it follows from
Proposition~\ref{uniqueShortestProp} that
the horizontal and vertical segments whose endpoints are in~$A_{1,i}$
belong to $\Fix(f)$; and it follows from our choice of $A_{2,i}$ that
$C_i \subset \Fix(f)$. It follows from
Proposition~\ref{uniqueShortestProp} that
each horizontal segment joining two points of $C_i$
belongs to $\Fix(f)$. Since $V_i$ is convex, therefore
$V_i \subset \Fix(f)$; hence $X' \subset \Fix(f)$.
Since by hypothesis, 
$X \setminus X' \subset A \subset \Fix(f)$,
we must have $Fix(f) = X$, and the assertion
follows.
\end{proof}

In the following example, we show that the sets $\{V_i\}_{i=1}^n$ and $A$
of Theorem~\ref{corners} are not in general unique, and $A$ may not be minimal.
\begin{exl}
\label{unionRectanglesExl}
Let $X = ([0,2]_{\Z} \times [0,2]_{\Z}) \cup
          ([2,4]_{\Z} \times[0,3]_{\Z})$
(see Figure~\ref{fig:unionRectangles}),
for which the union above yields from Theorem~\ref{corners} that
\[ A = \{(0,0), (0,2), (2,0), (2,2), (2,3),
         (4,0), (4,3)\}
\]
is a $c_1$-freezing set of $X$. 
Notice also that $X$ can be differently described as 
$X=([0,4]_{\Z} \times [0,2]_{\Z}) \cup ([2,4]_{\Z} \times[0,3]_{\Z})$
from which Theorem~\ref{corners} yields a different freezing set,
\[ F = \{(0,0), (0,2), (2,0), (2,3), (4,0), (4,2), (4,3)\}.
\]
A minimal freezing set for $(X,c_1)$ that is a proper subset of $A$ is
\[ A' = \{(0,0), (4,0), (4,3), (2,3), (0,2) \}.
\]
\end{exl}

\begin{figure}
    \centering
    \includegraphics[height=2in]{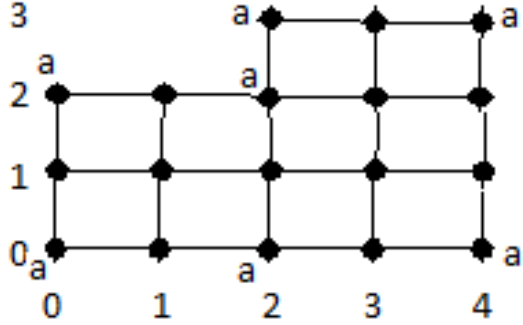}
    \caption{The digital image of
    Example~\ref{unionRectanglesExl}. Points
    of the freezing set $A$ are marked ``a".
    For the minimal freezing set $A' \subset A$, we have $\{(2,0), (2,2)\} \subset A \setminus A'$.
    }
    \label{fig:unionRectangles}
\end{figure}

\begin{proof}
First, we show $A'$ is a freezing set. Let $f \in C(X,c_1)$ be such that
$f|_{A'} = \id_{A'}$. From Proposition~\ref{uniqueShortestProp}, the line segments 
\begin{itemize}
    \item from $(0,0)$ to $(0,2)$,
    \item from $(0,0)$ to $(4,0)$,
    \item from $(4,0)$ to $(4,3)$, and
    \item from $(4,3)$ to $(2,3)$
\end{itemize}
all belong to $\Fix(f)$. Therefore, by Proposition~\ref{uniqueShortestProp}, the line segments 
\begin{itemize}
    \item from $(3,0)$ to $(3,3)$ and
    \item from $(2,0)$ to $(2,3)$
\end{itemize}
belong to $\Fix(f)$. Therefore, by Proposition~\ref{uniqueShortestProp}, the line segment
from $(0,2)$ to $(2,2)$ belongs to $\Fix(f)$.
Therefore, by Proposition~\ref{uniqueShortestProp}, the line segment
from $(1,0)$ to $(1,2)$ belongs to $\Fix(f)$. Thus $X = \Fix(f)$, so $A'$ is a freezing set
for $(X,c_1)$.

To show $A'$ is minimal, observe that for every $p \in A'$ there exists $q \in X$ such that
$q$ is a close $c_1$-neighbor of $p$:

$(1,1)$ is a close $c_1$-neighbor of both $(0,0)$ and $(0,2)$:

$(3,1)$ is a close $c_1$-neighbor of $(4,0)$; and

$(3,2)$ is a close $c_1$-neighbor of both $(2,3)$ and $(4,3)$.

It follows from Lemma~\ref{closeNbr} that $p \in A'$ implies
$A' \setminus \{p\}$ is not a freezing set for $(X,c_1)$. The assertion follows.
\end{proof}

In light of Theorem~\ref{convDiskThmActual},
perhaps Theorem~\ref{corners} will be especially
useful for $c_1$-connected images that are not
polygonal, as in the following.

\begin{exl}
\label{rectAndTrapzExl}
Let $X$ be the union of the horizontal segments
$[0,8]_{\Z} \times \{0\}$, $[0,3] \times \{1\}$,
$[0,3] \times \{2\}$,
$[6,8]_{\Z} \times \{1\}$, and 
$[7,8]_{\Z} \times \{2\}$ (see 
Figure~\ref{fig:rectAndTrapz}).
For the union $D_1 \cup D_2$ of thick convex disks that are subsets of $X$,
where
\[ D_1 = \{(x,y) \in X \, | \, x \le 3\}, ~~~  
   D_2 = \{x,y) \in X \, | \, x \ge 6\},
\] with $D_2$ considered
    with a bounding curve including the segment from $(7,2)$ to $(6,1)$
    (the dashed segment in Figure~\ref{fig:rectAndTrapz}),
Theorem~\ref{corners} gives for $(X,c_1)$ the
freezing set
\begin{equation}
\label{rAndTEeq}
    A=\left \{ \begin{array}{l}
     (0,0), (0,2), (3,0), (3,2), (4,0), (5,0), \\
     (6,0), (6,1), (7,2), (8,0), (8,2) 
\end{array} \right \}.
\end{equation}
A minimal freezing set $A' \subset A$ is
\[ A' = \{(0,0), (0,2), (3,2), (8,0), (8,2)\}.
\]
\end{exl}

\begin{figure}
    \centering
    \includegraphics[height=1in]{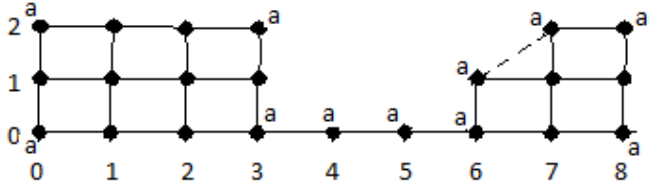}
    \caption{The digital image of
    Example~\ref{rectAndTrapzExl}. Points of the set
    $A$ of Theorem~\ref{corners} are marked ``a", where $A$ is based on
    the union $D_1 \cup D_2$ of thick convex disks that are subsets of $X$,
    where \newline $(x,y) \in D_1$ implies
    $x \le 3$, \newline $(x,y) \in D_2$
    implies $x \ge 6$, and \newline $D_2$ is
    considered with a bounding curve
    including the slanted segment from $(7,2)$
    to $(6,1)$.
        }
    \label{fig:rectAndTrapz}
\end{figure}

\begin{proof}
Let $f \in C(X,c_1)$ such that
$A' \subset \Fix(f)$. By~(\ref{rAndTEeq}) and
Proposition~\ref{uniqueShortestProp}, it follows
that the horizontal segments
$[0,8]_{\Z} \times \{0\}$ and
$[0,3]_{\Z} \times \{2\}$ belong to
$\Fix(f)$. It follows from
Proposition~\ref{uniqueShortestProp} that the
vertical segments $\{1\} \times [0,2]_{\Z}$
and $\{2\} \times [0,2]_{\Z}$ belong to
$\Fix(f)$. By Proposition~\ref{uniqueShortestProp}, the vertical
segment from $(8,0)$ to $(8,2)$ belongs to $\Fix(f)$. This much shows
$X \setminus \{(6,1), (7,1), (7,2)\} \subset \Fix(f)$.

Since $(6,1) \adj_{c_1} (6,0) \in \Fix(f)$, we must have
$p_1(f(6,1)) \in \{5, 6, 7\}$.
\begin{itemize}
    \item If $p_1(f(6,1)) = 5$ then by Lemma~\ref{cuPulling}, $p_1(f(7,1)) < 7$ and
          $p_1(f(8,1)) < 8$, a contradiction since $(8,1) \in \Fix(f)$.
    \item If $p_1(f(6,1)) = 7$ then the continuity of $f$ requires that 
           $(6,0) \not \in \Fix(f)$, a contradiction.
\end{itemize}
We conclude that $p_1(f(6,1)) = 6$.

Also since $(6,1) \adj_{c_1} (6,0) \in \Fix(f)$, we must have, by continuity of $f$,
$p_2(f(6,1)) \in \{0,1\}$.
If $p_2(f(6,1))=0$ then, since $f \in C(X,c_1)$, either $p_1(f(7,1)) =6$ or
$p_2(f(7,1))=0$. In either case, the continuity of $f$ would require
$(8,1) \not \in \Fix(f)$, a contradiction.
Therefore, we must have $p_2(f(6,1)) = 1$, so $(6,1) \in \Fix(f)$.

Therefore, $(7,1) \in \Fix(f)$, by  Proposition~\ref{uniqueShortestProp},
since $(7,1)$ is on the unique shortest path between the fixed points
$(6,1)$ and $(8,1)$.

Now we have $N(X,c_1,(7,2)) \subset \Fix(f)$, so the continuity of $f$ implies
that $(7,2) \in \Fix(f)$.

Thus $X = \Fix(f)$, so $A'$ is a freezing set.

To show $A'$ is minimal, note that every $p \in A'$ has a close
$c_1$-neighbor in $X$:
\[  \begin{array}{l}
    (1,1) \mbox{ is a close $c_1$-neighbor of both } (0,0) \mbox{ and } (0,2); \\
    (2,1) \mbox{ is a close $c_1$-neighbor of }  (3,2); \mbox{ and } \\
    (7,1) \mbox{ is a close $c_1$-neighbor of both }  (8,0) \mbox{ and } (8,2). 
\end{array} .
\]
From Lemma~\ref{closeNbr} it follows that $A'$ is a subset of every $c_1$-freezing
set of $X$. The assertion follows.
\end{proof}

\section{$c_2$ results}
In this section, we derive a result for the $c_2$ adjacency that is dual to
Theorem~\ref{corners}. We use the following.

\begin{thm}
{\rm \cite{BxConvex}}
\label{convDiskThmC2Actual}
Let $X$ be a thick convex disk with a  bounding
curve $S$. Let $B_1$ be the set of
points $x \in S$ such that
$x$ is an endpoint of a maximal
slanted edge in $S$. Let $B_2$ 
be the union of maximal horizontal 
and maximal vertical line segments in $S$.
Let $B = B_1 \cup B_2$. Then $B$ is a 
minimal freezing set for $(X,c_2)$
(see Figure~\ref{fig:convexRlts}(iii)).
\end{thm}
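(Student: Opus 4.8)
The plan is to establish the two assertions of the statement separately: first that $B$ is a freezing set for $(X,c_2)$, and then that it is minimal. For the freezing-set direction, I would take an arbitrary $f \in C(X,c_2)$ with $B \subset \Fix(f)$ and show $\Fix(f)=X$. The strategy is to recover all of $S$ inside $\Fix(f)$ and then invoke Theorem~\ref{bdCurveFreezeSet}. Every point of $S$ lies on a maximal horizontal segment, a maximal vertical segment, or a maximal slanted segment. The points on horizontal and vertical segments lie in $B_2 \subset \Fix(f)$. For each maximal slanted segment $P$ of $S$, its two endpoints lie in $B_1 \subset \Fix(f)$, so Lemma~\ref{c2shortSlant} gives $P \subset \Fix(f)$. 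Hence $S \subset \Fix(f)$, and since $S$ is a bounding curve of the disk $X$, Theorem~\ref{bdCurveFreezeSet} yields $f = \id_X$. This direction is short precisely because the $c_2$ adjacency makes a slanted segment rigid once its endpoints are pinned, exactly dually to the way horizontal and vertical edges behave under $c_1$ in Theorem~\ref{convDiskThmActual}.

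For minimality, I would show that every $p \in B$ has a close $c_2$-neighbor in $X$; Lemma~\ref{closeNbr} then forces $p$ into every $c_2$-freezing set, so no proper subset of $B$ can be a freezing set. This reduces to a local case analysis according to the role of $p$ on $S$, and the thickness hypotheses of Definition~\ref{thickness} are exactly what supply the required neighbors. If $p$ is an interior point of a maximal horizontal (resp.\ vertical) segment, I would take $q$ to be the lattice point immediately on the interior side of $p$, which belongs to $X$ since $X$ is a thick disk; because the exterior lies on the opposite side, every point of $N(X,c_2,p)$ is a $c_2$-neighbor of $q$ or equals $q$, so $N(X,c_2,p) \subset N^*(X,c_2,q)$ and $q$ is a close $c_2$-neighbor. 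If $p$ is the vertex of a $90^\circ$ interior angle with horizontal and vertical sides, I would use the interior point $q$ of Figure~\ref{fig:degrees90a}; if $p$ is the vertex of a $90^\circ$ angle with slanted sides (an endpoint of two slanted segments, hence in $B_1$), I would use the $c_1$-adjacent interior point $q$ of Figure~\ref{fig:degrees90b}; and if $p$ is the vertex of a $135^\circ$ angle (an endpoint of a slanted segment meeting a horizontal or vertical segment), I would use the points $b,b'$ of Figure~\ref{fig:degrees135c1}. In each configuration I would verify the containment $N(X,c_2,p) \subset N^*(X,c_2,q)$ directly.

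I expect the main obstacle to be the minimality half, specifically verifying that the interior points produced by the thickness conditions genuinely satisfy $N(X,c_2,p) \subset N^*(X,c_2,q)$ in every corner configuration. The bookkeeping is delicate because the set of $c_2$-neighbors of a corner $p$ that actually lie in $X$ depends on the precise angle and on which sides of $p$ are occupied by $X$, so the $90^\circ$ and $135^\circ$ cases (horizontal/vertical versus slanted sides) must be examined separately. The symmetry observations following Definition~\ref{closeNbrDef} can be used to cut the number of cases roughly in half. The freezing-set half, by contrast, should go through cleanly once Lemma~\ref{c2shortSlant} and Theorem~\ref{bdCurveFreezeSet} are in hand.
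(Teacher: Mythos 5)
You should first note that this paper contains no proof of this statement to compare against: the theorem is imported verbatim from~\cite{BxConvex}, which is why it carries the citation and is used here only as a tool (for Theorem~\ref{slantCorners}). Judged on its own merits, your argument is correct, and it is exactly the route the present paper's quoted machinery makes available: $B_2$ pins the horizontal and vertical edges of $S$, Lemma~\ref{c2shortSlant} pins each maximal slanted edge once its endpoints in $B_1$ are fixed, so $S \subset \Fix(f)$, and Theorem~\ref{bdCurveFreezeSet} then gives $f = \id_X$; and your minimality half, exhibiting a close $c_2$-neighbor for every $p \in B$ and invoking Lemma~\ref{closeNbr}, is precisely the technique the paper itself deploys in its own minimality arguments (Examples~\ref{unionRectanglesExl}, \ref{rectAndTrapzExl}, \ref{maskExl}); it even yields slightly more than minimality, namely that $B$ lies in \emph{every} $c_2$-freezing set of $X$. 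Two details deserve tightening. First, for $p$ an interior point of a maximal horizontal (say bottom) edge, you justify $q \in X$ for the point $q$ immediately inside by saying ``$X$ is a thick disk,'' but the thickness conditions of Definition~\ref{thickness} say nothing about interior points of horizontal or vertical edges; you should instead derive $q \in X$ from convexity (the thickness conditions at the two endpoints of the edge supply points of $X$ one row inside near the ends, and convexity fills the horizontal segment between them). Second, in the $135^\circ$ case only $b'$, the point $c_1$-adjacent to $p$ in the interior of the angle, can serve as the close $c_2$-neighbor: with the horizontal side toward $(1,0)$ and the slanted side toward $(-1,1)$, the other supplied point $b=(1,1)$ is not $c_2$-adjacent to $(-1,1) \in N(X,c_2,p)$, so the containment $N(X,c_2,p) \subset N^*(X,c_2,b)$ fails, while it holds for $b'=(0,1)$. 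With these two repairs your case analysis is complete, since convexity and Remark~\ref{segSlope} restrict the vertices of $S$ to the $90^\circ$ and $135^\circ$ configurations you enumerate.
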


\begin{thm}
\label{slantCorners}
Let $V_i \subset X \subset \Z^2$, $i \in \{1,\ldots,n\}$ where
each $V_i$ is a thick convex disk. 
Let $X' = \bigcup_{i=1}^n V_i$.
Let $C_i$ be a bounding curve of $V_i$.
Let $B_{1,i}$ be the union of maximal horizontal and maximal vertical
segments of $C_i$.
Let $B_{2,i}$ be the set of endpoints of maximal slanted
segments of $C_i$. Then
$B = (X \setminus X') \cup \bigcup_{i=1}^n (B_{1,i} \cup B_{2,i})$ is a freezing set for $(X,c_1)$.
\end{thm}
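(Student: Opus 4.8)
The plan is to follow the structure of the proof of Theorem~\ref{corners}, reducing the whole statement to showing that each bounding curve $C_i$ is pointwise fixed. Let $f \in C(X,c_1)$ with $B \subseteq \Fix(f)$. Since $X \setminus X' \subseteq B$, we have $X \setminus X' \subseteq \Fix(f)$ at once, so it remains to prove $X' = \bigcup_{i=1}^n V_i \subseteq \Fix(f)$, and for this it suffices to prove $C_i \subseteq \Fix(f)$ for each $i$. Indeed, once $C_i \subseteq \Fix(f)$, every horizontal segment joining two points of $C_i$ is a unique shortest $c_1$-path in $X$, hence lies in $\Fix(f)$ by Proposition~\ref{uniqueShortestProp}; since $V_i$ is convex these chords exhaust $V_i$, giving $V_i \subseteq \Fix(f)$, exactly as in the proof of Theorem~\ref{corners}. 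Combining over $i$ with $X \setminus X' \subseteq \Fix(f)$ then yields $\Fix(f)=X$.

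So the work is to show $C_i \subseteq \Fix(f)$. The maximal horizontal and vertical segments of $C_i$ are precisely $B_{1,i} \subseteq B \subseteq \Fix(f)$, so these are already fixed, and the endpoints of each maximal slanted segment lie in $B_{2,i} \subseteq \Fix(f)$. First I would use Proposition~\ref{uniqueShortestProp} on the already-fixed points of $C_i$ joined by unique shortest $c_1$-paths (the horizontal and vertical chords of the disk), in the manner of the proofs of Examples~\ref{unionRectanglesExl} and~\ref{rectAndTrapzExl}, to propagate fixedness to the grid points of $V_i$ lying just inside the slanted edges.

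The hard part will be fixing the interior points of the slanted segments of $C_i$: these are the only points of $C_i$ not supplied directly by $B$, and they cannot be obtained from Proposition~\ref{uniqueShortestProp}, since consecutive points of a slanted segment are $c_2$-adjacent but not $c_1$-adjacent, so such a segment is never a unique shortest $c_1$-path between its endpoints (the monotone staircases furnish many shortest $c_1$-paths of equal length). The clean tool for this step, Lemma~\ref{c2shortSlant}, fixes a slanted segment from its endpoints, but it is stated for $c_2$-continuous maps and so does not apply verbatim here. My intended route under $c_1$ is a pulling argument in the spirit of Lemma~\ref{cuPulling}, together with the delicate continuity case analysis used to pin the point $(6,1)$ in the proof of Example~\ref{rectAndTrapzExl}: for an interior slanted point $p$, I would combine the fixed $c_1$-neighbors of $p$ produced above, the thickness condition~(\ref{slantSegProp}) at $p$, and $c_1$-continuity to force $f(p)=p$. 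I expect this to be the decisive and delicate step, and the point at which one must check carefully that the hypotheses genuinely forbid $p$ from collapsing inward toward an already-fixed grid point; should that verification resist completion, it would signal that the operative adjacency for this dual of Theorem~\ref{corners} is in fact $c_2$, for which Lemma~\ref{c2shortSlant} delivers the slanted interiors immediately.
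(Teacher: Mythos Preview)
Your suspicion in the final sentence is exactly right: the ``$c_1$'' in the statement is a typo for ``$c_2$''. The theorem sits in the paper's ``$c_2$ results'' section and is explicitly announced as the dual of Theorem~\ref{corners}; the paper's own proof begins ``Let $f \in C(X,c_2)$\ldots''. So the difficulty you isolated with the slanted edges is not a gap in your reasoning but a symptom of the misprint.

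With $c_2$ in place, the paper's argument runs dually to yours. The slanted segments of $C_i$ are handled first: their endpoints lie in $B_{2,i}\subset\Fix(f)$, and a slanted segment \emph{is} a unique shortest $c_2$-path between its endpoints, so Proposition~\ref{uniqueShortestProp} (equivalently Lemma~\ref{c2shortSlant}) gives them outright. Together with $B_{1,i}$ this yields $C_i\subset\Fix(f)$. For the interior of $V_i$ the paper does \emph{not} use Proposition~\ref{uniqueShortestProp}, because under $c_2$ a horizontal or vertical chord is no longer a unique shortest path; instead it invokes the pulling Lemma~\ref{cuPulling} twice, once via a vertical chord to pin $p_1(f(x))$ and once via a horizontal chord to pin $p_2(f(x))$.

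Your attempted $c_1$ route inverts both steps: you correctly note that horizontal chords are unique shortest $c_1$-paths (so Proposition~\ref{uniqueShortestProp} handles the interior), but then the slanted edges resist, since staircases furnish many shortest $c_1$-paths and Lemma~\ref{c2shortSlant} is unavailable. The ad~hoc pulling/case analysis you sketch for the slanted interiors is not what the paper does and would, as you anticipated, be the delicate point; in fact there is no clean general argument for it under $c_1$, which is precisely why the theorem is stated for $c_2$.
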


\begin{proof}
Let $f \in C(X,c_2)$ such that $B \subset \Fix(f)$. By hypothesis $B_{1,i} \subset \Fix(f)$.
Let $S$ be a maximal
slanted segment of $C_i$. Since $B_{2,i} \subset \Fix(f)$, Proposition~\ref{uniqueShortestProp}
implies $S \subset \Fix(f)$.
It follows that $C_i \subset \Fix(f)$. Since $V_i$ is convex, for every $x \in V_i$
\begin{itemize}
    \item there is a vertical segment joining two members of
          $C_i$ and containing $x$; it follows from Lemma~\ref{cuPulling}
          that $p_1(f(x)) = p_1(x)$; and
    \item there is a horizontal segment joining two members of
          $C_i$ and containing $x$; it follows from Lemma~\ref{cuPulling}
          that $p_2(f(x)) = p_2(x)$. Hence $x \in \Fix(f)$.
\end{itemize}
Thus, for all $i$, $V_i \subset \Fix(f)$.
Since by hypothesis, $X \setminus X' \subset \Fix(f)$, it follows that 
$X = \Fix(f)$. Since $f$ is arbitrary, the assertion follows.
\end{proof}

\begin{figure}
    \centering
    \includegraphics[height=1.5in]{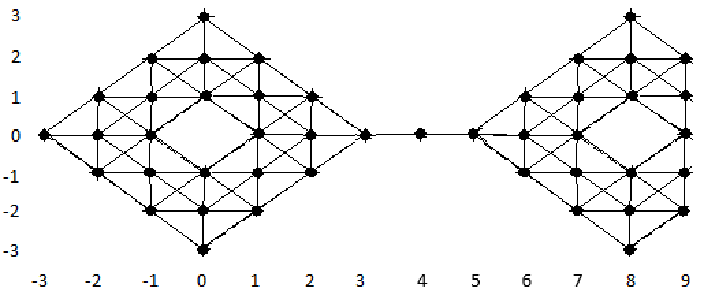}
    \caption{The digital image of Example~\ref{maskExl}. $X= \{(4,0)\} \cup \bigcup_{i=1}^8 D_i$,
    where 
    the $D_i$ are the thick convex disks listed below.\newline
 Subsets of $\{(x,y) \in X \, | \, x \le 3\}$: \newline
 $~~~~~D_1$, with hull vertices $\{(-3,0), (0,3), (1,2), (-2,-1) \}$; \newline
 $~~~~~D_2$, with hull vertices  $\{(1,2), (3,0), (2,-1), (0,1) \}$; \newline
 $~~~~~D_3$, with hull vertices  $\{(2,-1), (0,-3), (-1,-2), (1,0) \}$; and \newline
 $~~~~~D_4$, with hull vertices $\{(-1,-2), (-2,-1), (-1,0), (0,-1) \}$. \newline
 Subsets of $\{(x,y) \in X \, | \, x \ge 5\}$: \newline
     $~~~~~D_5$, with hull vertices $\{(5,0), (7,2), (8,1), (6,-1) \}$, \newline
    $~~~~~D_6$, with hull vertices $\{(7,2), (8,3), (9,2), (9,0) \}$, \newline
    $~~~~~D_7$, with hull vertices  $\{(9,0), (9,-2), (8,-3), (7,-2)\}$, and \newline
      $~~~~~D_8$, with hull vertices $\{(7,-2), (6,-1), (7,0), (8,-1) \}$.
   } 
    \label{fig:mask}
\end{figure}

\begin{exl}
\label{maskExl}
Let $X \subset \Z^2$ be the digital image shown in Figure~\ref{fig:mask}. 
The hull vertices listed for disks $D_i$ in this figure 
are all endpoints of maximal slanted bounding edges or members of horizontal or vertical
bounding edges of their
respective $D_i$. By Theorem~\ref{slantCorners}, these hull vertices of the $D_i$;
$(9,1)$ and $(9,-1)$, members of vertical
bounding edges of $D_6$ and $D_7$, respectively; and 
$(4,0) \in X \setminus \bigcup_{i=1}^8 D_i$, make up
a freezing set $B$ for $(X,c_2)$. Thus a listing of 
members of $B$ (note there are vertices that belong to more than one $D_i$):

$B =$
\[\left \{ \begin{array}{l}
     (-3,0), (0,3), (1,2), (-2,-1), (3,0), (2,-1), (0,1), (0,-3), (-1,-2),  \\
     (1,0), (-1,0), (0,-1), (3,0), (4,0), (5,0), (7,2), (8,1), (6,-1), (8,3), (9,2), \\
     (9,1),(9,0), (9,-1), (9,-2), (8,-3), (7,-2), (6,-1), (7,0), (8,-1)
\end{array}
    \right \}
\]

Let $B' \subset B$ be the set
\[ B' = \{(-3,0), (0,-3), (0,3), (8,-3), (8,3), (9,-2), (9,-1), (9,1), (9,2)\}.
\]
Then $B'$ is a minimal freezing set for $(X,c_2)$.
\end{exl}

\begin{proof}
Let $f \in C(X,c_2)$ such that $B' \subset \Fix(f)$. By
Proposition~\ref{uniqueShortestProp}, we have the following.
\begin{itemize}
    \item The line segment $S_1$ from $(-3,0)$ to $(0,-3)$ belongs to $\Fix(f)$.
    \item The line segment $S_2$ from $(-3,0)$ to $(0,3)$  belongs to $\Fix(f)$.
    \item The path $S_3$ consisting of the line segment from $(0,-3)$ to $(3,0)$, the
          line segment from $(3,0)$ to $(5,0)$, and the line segment from
          $(5,0)$ to $(8,-3)$, belongs to $\Fix(f)$.
   \item The path $S_4$ consisting of the line segment from $(0,3)$ to $(3,0)$, the
          line segment from $(3,0)$ to $(5,0)$, and the line segment from
          $(5,0)$ to $(8,3)$, belongs to $\Fix(f)$.
    \item The line segment $S_5$ from $(8,-3)$ to $(9,-2)$ belongs to $\Fix(f)$.
    \item The line segment $S_6$ from $(8,3)$ to $(9,2)$ belongs to $\Fix(f)$.
\end{itemize}
Also, by hypothesis, the line segment $S_7$ from $(9,-1)$ to $(9,1)$ 
belongs to $\Fix(f)$. By the convexity of the $V_i$, every 
$x \in B' \setminus \bigcup_{k=1}^7 S_k$ belongs to a horizontal 
line segment between two members of $\bigcup_{k=1}^7 S_k$; hence
by Lemma~\ref{cuPulling}, $p_1(f(x)) = p_1(x)$. Also by the convexity of the $V_i$, every 
$x \in B' \setminus \bigcup_{k=1}^7 S_k$ belongs to a vertical 
line segment between two members of $\bigcup_{k=1}^7 S_k$; hence
by Lemma~\ref{cuPulling}, $p_2(f(x)) = p_2(x)$. Thus $x \in \Fix(f)$. 
Thus $X = \Fix(f)$, so $B'$ is a freezing set.

Notice that every $p \in B'$ has a
close $c_2$-neighbor in~$X$, as listed below.
\[ \begin{array}{llc}
   p \in B' & ~~~~~ & \mbox{close } c_2 \mbox{ neighbor of } p \mbox{ in } (X,c_2)\\
   (-3,0) & ~~~~~ & (-2,0) \\
   (0,-3) & ~~~~~ & (0,-2) \\
   (0,3) & ~~~~~ & (0,2) \\
   (8,-3) & ~~~~~ & (8,-2) \\
   (8,3) & ~~~~~ & (8,2) \\
   (9,-2) & ~~~~~ & (8,-2) \\
   (9,-1) & ~~~~~ & (8,-1) \\
   (9,1) & ~~~~~ & (8,1) \\
   (9,2) & ~~~~~ & (8,2)
\end{array}
\]
By Lemma~\ref{closeNbr}, $p$ belongs to every
freezing set of $(X,c_2)$. Therefore, $B'$ is minimal.
\end{proof}

\section{Further remarks}
Theorems~\ref{corners} and~\ref{slantCorners} give methods for finding a freezing set for
$(X,c_1) \subset \Z^2$ or $(X,c_2) \subset \Z^2$, respectively. Roughly, a freezing set
is found by filling $X$ as much as possible by thick convex disk subsets,
then using the formula of the respective theorem.
For both $c_1$ and $c_2$, the resulting freezing set can be examined, often using
tools used in our examples, for a subset
that is a minimal freezing set.

\end{document}